\documentclass[11pt]{article}

\usepackage{amsmath,amssymb,amsthm,microtype,geometry}

\newtheorem{theorem}{Theorem}[section]

\newtheorem{lemma}{Lemma}[section]

\newtheorem{proposition}{Proposition}[section]
\newtheorem{conjecture}{Conjecture}[section]

\title{Norm rigidity and equality cases for the Dyn--Farkhi inequality}
\author{Mark Meyer}

\begin{document}

\maketitle

\begin{abstract}
    For a convex body $K\subset\mathbb{R}^2$ that is symmetric with respect to the origin, and for a nonempty set $S\subset\mathbb{R}^2$, we study the $K$-Hausdorff distance from convex hull, defined by
    \begin{align*}
        d^{(K)}(S):=\sup_{x\in \textup{conv}(S)}\inf_{s\in S}\|x-s\|_K,
    \end{align*}
    where $\|\cdot \|_K$ is the norm whose closed unit ball is $K$. We consider the problem of characterizing the origin symmetric convex bodies $K$ for which 
    \begin{align*}
        d^{(K)}(A+B)^2\leq d^{(K)}(A)^2+d^{(K)}(B)^2
    \end{align*}
    holds for all nonempty compact $A,B\subset\mathbb{R}^2$. We solve this problem, proving that this property holds if and only if $K$ is an ellipse centered at $0$. We then characterize the conditions for equality for this bound when $K$ is an ellipse. 
\end{abstract}


\section{Introduction}

In the paper \cite{meyer2}, we proved that if $A,B\subset\mathbb{R}^2$ are nonempty compact sets, then
\begin{align}\label{eq:dyn_farkhi_conjecture}
    d(A+B)^2\leq d(A)^2+d(B)^2,
\end{align}
where $d(S):=d_H(S,\textup{conv}(S))$ is the Hausdorff distance between a nonempty set $S$ and its convex hull, and 
\begin{align*}
    A+B:=\{a+b:a\in A,b\in B\}
\end{align*}
is Minkowski summation. This resolved a conjecture of Dyn and Farkhi \cite{dyn1} in the planar case. It turns out that the bound \eqref{eq:dyn_farkhi_conjecture} is false when $n\geq 3$, as proved by Fradelizi, Madiman, Marsiglietti, and Zvavitch in their survey \cite{fradelizi1}, and false even in the symmetric case $A=B$, as shown recently by van Hintum in \cite{vanHintum}.

We are interested in studying the rigidity of norms in the context of the Hausdorff distance. In particular, we would like to know which norms preserve the square subadditive bound \eqref{eq:dyn_farkhi_conjecture} for the associated Hausdorff distance. Specifically, a set $K\subset\mathbb{R}^2$ is a convex body if it is compact, convex, with nonempty interior. A convex body $K$ is symmetric about the origin if $K=-K$. The norm whose closed unit ball is the origin symmetric convex body $K$ is defined by
\begin{align*}
    \|x\|_K:=\inf\{\lambda>0:x\in \lambda K\},\qquad x\in \mathbb{R}^2.
\end{align*}
Formally, as done in the survey \cite{fradelizi1}, the $K$-Hausdorff distance from convex hull is defined by
\begin{align*}
    d^{(K)}(S):=\sup_{x\in \textup{conv}(S)}d^{(K)}(x,S),\qquad d^{(K)}(x,S):=\inf_{s\in S}\|x-s\|_K,
\end{align*}
for any nonempty set $S\subset\mathbb{R}^2$. In the case where $K$ is the unit $\ell_2$-ball $B_2^2$, we have the original definition of Hausdorff distance $d^{(B_2^2)}(S)=d(S)$. We will prove the following characterization theorem that shows that among all planar norms, universal square subadditivity of the Hausdorff distance characterizes inner product norms.

\begin{theorem}\label{theorem:norm_rigidity_theorem}
    Let $K\subset \mathbb{R}^2$ be a convex body that is symmetric about the origin. Then
    \begin{align*}
        d^{(K)}(A+B)^2\leq d^{(K)}(A)^2+d^{(K)}(B)^2
    \end{align*}
    holds for all nonempty compact $A$ and $B$ in $\mathbb{R}^2$ if and only if $K$ is an ellipse.
\end{theorem}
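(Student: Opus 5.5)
The plan has two directions. The ``if'' direction is a reduction to the Euclidean case; the ``only if'' direction exhibits, for every non-elliptic $K$, explicit compact sets $A,B$ violating the inequality.

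\emph{``If'' direction.} Suppose $K$ is an ellipse centered at $0$. Then $K=T(B_2^2)$ for some invertible linear map $T$, and $\|x\|_K=|T^{-1}x|$. Since $T^{-1}$ is linear, it commutes with convex hulls and with Minkowski sums, so
\begin{align*}
    d^{(K)}(S)=d(T^{-1}S)\qquad\text{and}\qquad T^{-1}(A+B)=T^{-1}A+T^{-1}B .
\end{align*}
The inequality for $K$ is therefore exactly \eqref{eq:dyn_farkhi_conjecture}, proved in \cite{meyer2}, applied to the compact sets $T^{-1}A$ and $T^{-1}B$.

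\emph{``Only if'' direction: the test sets.} I will use two-point sets, for which everything is computable. For $A=\{0,u\}$ one has $d^{(K)}(A)=\tfrac12\|u\|_K$, since the worst point of the segment is its midpoint. Now take $B=\{0,v\}$. Then $A+B=\{0,u,v,u+v\}$ is the vertex set of a parallelogram, and the inequality demands
\begin{align*}
    d^{(K)}(\{0,u,v,u+v\})^2\le \tfrac14\bigl(\|u\|_K^2+\|v\|_K^2\bigr).
\end{align*}
The center $c=(u+v)/2$ of the parallelogram lies in the convex hull, and its distances to the four vertices are $\tfrac12\|u+v\|_K$ and $\tfrac12\|u-v\|_K$, each attained twice. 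Hence the left side is at least $\tfrac14\min(\|u+v\|_K,\|u-v\|_K)^2$, and the hypothesis forces
\begin{align*}
    \min\bigl(\|u+v\|_K^2,\|u-v\|_K^2\bigr)\le \|u\|_K^2+\|v\|_K^2\qquad\text{for all }u,v.
\end{align*}

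\emph{``Only if'' direction: recovering the parallelogram law.} Replacing $v$ by $-v$ does not change the condition, so by itself it is not yet the parallelogram law. To get the law, I will choose $v$ ``$K$-orthogonal'' to $u$ in a symmetric sense, namely $\|u+v\|_K=\|u-v\|_K$; such a $v$ exists on each ray of the unit circle by continuity, an intermediate value argument as one rotates $v$. Then the condition becomes
\begin{align*}
    \|u+v\|_K^2\le \|u\|_K^2+\|v\|_K^2
\end{align*}
whenever $u$ and $v$ are isosceles-orthogonal. This is a Pythagorean-type inequality for isosceles orthogonality, and characterization results for normed planes (of the type due to Day, James, or Alonso--Benítez) show that a one-sided Pythagorean inequality of this kind holding for all isosceles-orthogonal pairs forces the norm to be Euclidean.

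To be self-contained rather than citing such a result, I would try a direct argument instead. Take the maximal-area inscribed ellipse of $K$ (John's ellipse) and normalize it to $B_2^2$, so that $\|\cdot\|_K\le|\cdot|$. If $K\neq B_2^2$, there is a point $x$ with $\|x\|_K<|x|$, and contact points of $K$ with the circle, where $\|\cdot\|_K=|\cdot|$. I would then look for $u,v$ near the contact points, arranged so that $u\pm v$ lands where $\|\cdot\|_K$ is relatively large while $\|u\|_K,\|v\|_K$ stay small; that is, the Pythagorean inequality would be violated using the contact points. I might also need a sharper lower bound for $d^{(K)}$ of the four-point set than the center alone gives, for instance by optimizing the point of the parallelogram.

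\emph{Main obstacle.} I expect the hardest step to be this last one: showing that every non-ellipse admits a violating configuration. If the two-point sets prove insufficient, I would enlarge the family, for example to $A$ and $B$ given by vertices of small polygons.
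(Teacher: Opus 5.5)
Your ``if'' direction is correct and matches the paper's argument. Your reduction in the ``only if'' direction is also sound: two-point sets, the center of the parallelogram, and isosceles-orthogonal pairs are exactly what the paper uses. Its $A=\{x,p\}$, $B=\{0,-x-p\}$ is a translate of your configuration with $u=p-x$, $v=-(p+x)$, so $u\pm v\in\{-2x,2p\}$.

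The gap is that you never exhibit the violating pair, and that is the whole content of this direction; you say so yourself. The citation could close it only if made precise. Every isosceles-orthogonal pair is, up to scaling, of the form $u=e+f$, $v=e-f$ with $\|e\|_K=\|f\|_K=1$. So your condition is equivalent to $\|e+f\|_K^2+\|e-f\|_K^2\ge 4$ for all $K$-unit $e,f$. Day's theorem (this one-sided parallelogram inequality on the unit sphere forces an inner-product norm) would then finish. The self-contained route you sketch is not yet an argument: looking ``near'' the contact points, possibly sharpening the bound on $d^{(K)}$, or enlarging the family of sets.

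The missing idea is to place $u\pm v$ exactly at contact points, chosen adjacent. Normalize the John ellipse to $B_2^2$, so $\|\cdot\|_K\le|\cdot|$ with equality on $C:=\partial K\cap S^1$. By John's theorem $C$ contains at least two antipodal pairs, and $C\neq\partial K$ because $K\neq B_2^2$. Hence there are $p,x\in C$ with $p\neq\pm x$ such that the open arc of $\partial K$ from $p$ to $x$ contains no point of $C$. This arc spans less than a half-turn, because $-p,-x\in C$.

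The ray through $p+x$ meets $\partial K$ at a point $k$ of this arc. Since $k\notin C$ and $B_2^2\subset K$, we get $|k|>1$, so $\|p+x\|_K=|p+x|/|k|<|p+x|$, while $\|p-x\|_K\le|p-x|$. The Euclidean parallelogram law then gives
\begin{align*}
    \|p+x\|_K^2+\|p-x\|_K^2<|p+x|^2+|p-x|^2=4=\|2p\|_K^2=\|2x\|_K^2,
\end{align*}
which violates your necessary condition with $u=p+x$, $v=p-x$.

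So the two-point family and the center already suffice. No intermediate-value choice of $v$ is needed either, since $u\pm v=2p,2x$ makes the pair isosceles-orthogonal automatically.
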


The key element of our proof is John's ellipsoid theorem. We show that it is possible to construct an example contradicting the quadratic bound \eqref{eq:dyn_farkhi_conjecture} if $K$ is different from its John ellipsoid.

We will also characterize the equality conditions for the square subadditive bound. In the next theorem, for an ellipse $E$ centered at $0$, two lines $L_1$ and $L_2$ are called $E$-orthogonal if the lines $TL_1$ and $TL_2$ are orthogonal, where $T$ is any invertible linear transformation such that $TE=B_2^2$. Note that this definition is independent of the choice of $T$: if $T_1E=T_2E=B_2^2$, then $T_2T_1^{-1}$ preserves the Euclidean ball, and is therefore orthogonal. That is, $T_1$ and $T_2$ differ by an orthogonal transformation.

\begin{theorem}\label{theorem:equality_conditions}
    Let $E\subset\mathbb{R}^2$ be an ellipse centered at $0$. If $A,B\subset\mathbb{R}^2$ are compact and nonconvex, then
    \begin{align*}
        d^{(E)}(A+B)^2=d^{(E)}(A)^2+d^{(E)}(B)^2
    \end{align*}
    if and only if the affine hulls $\textup{aff}(A)$ and $\textup{aff}(B)$ are $E$-orthogonal lines.
\end{theorem}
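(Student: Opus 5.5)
The plan is to reduce to the Euclidean case first. If $T$ is an invertible linear map with $TE=B_2^2$, then $\|x\|_E=|Tx|$, $T\,\textup{conv}(S)=\textup{conv}(TS)$ and $T(A+B)=TA+TB$. Hence $d^{(E)}(S)=d(TS)$ for every nonempty $S$. Moreover, $\textup{aff}(A)$ and $\textup{aff}(B)$ are $E$-orthogonal exactly when $\textup{aff}(TA)$ and $\textup{aff}(TB)$ are orthogonal lines. So it suffices to prove the following. For nonconvex compact $A,B\subset\mathbb{R}^2$, we have $d(A+B)^2=d(A)^2+d(B)^2$ if and only if $\textup{aff}(A)$ and $\textup{aff}(B)$ are orthogonal lines.

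\emph{Sufficiency.} After a rotation and translation, assume $A\subset \mathbb{R}\times\{0\}$ and $B\subset\{0\}\times\mathbb{R}$. Then $\textup{conv}(A+B)=\textup{conv}(A)+\textup{conv}(B)$ is a rectangle. Each $x$ in it splits uniquely as $x=(a',b')$ with $a'\in\textup{conv}(A)$ and $b'\in\textup{conv}(B)$. By Pythagoras,
\begin{align*}
d(x,A+B)^2=\min_{a\in A,\,b\in B}\bigl(|a'-a|^2+|b'-b|^2\bigr)=d(a',A)^2+d(b',B)^2 .
\end{align*}
Here the minimum separates because the two coordinates are independent. Taking the supremum over $a'$ and $b'$ separately gives $d(A+B)^2=d(A)^2+d(B)^2$.

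\emph{Necessity.} This is the main obstacle. I would retrace the proof of the planar inequality in \cite{meyer2} and require every inequality in it to be tight. Concretely, fix $x\in\textup{conv}(A+B)$ with $d(x,A+B)=r:=d(A+B)$; such an $x$ exists by compactness. Write $x=a+b$ with $a\in\textup{conv}(A)$ and $b\in\textup{conv}(B)$. The open disk $D(x,r)$ misses $A+B$. Hence for every $a_0\in A$ the disk $D(x-a_0,r)$ misses $B$, and for every $b_0\in B$ the disk $D(x-b_0,r)$ misses $A$.

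By Carathéodory I would represent $a$ and $b$ as convex combinations of at most three points of $A$ and of $B$. I would then reduce, as I expect \cite{meyer2} does, to the case where $a$ and $b$ lie on segments $[a_1,a_2]$ and $[b_1,b_2]$ whose endpoints are near-points. Writing $s=d(a,A)$ and $t=d(b,B)$, the core estimate should have the form $r^2\le s^2+t^2$. It should come from a Pythagoras-type comparison in which the cross term $2\langle u,v\rangle$ is discarded, where $u$ and $v$ are displacements in $A$-directions and $B$-directions respectively. Equality then forces three things: $s=d(A)$, $t=d(B)$, and the vanishing of every discarded cross term.

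From these I would extract two conclusions. First, the relevant directions of $A$ are orthogonal to those of $B$. Second, $A$ has no point off the line through $a_1,a_2$. To see the second, suppose $a_3\in A$ lies off that line. Then either $a$ lies in the interior of a triangle in $\textup{conv}(A)$, or some translate $a_3+B$ enters $D(x,r)$; each case should make one of the inequalities strict. The nonconvexity hypothesis is what guarantees $s,t>0$, so that both directions are genuinely determined. Without it, for instance when $B$ is convex, equality holds trivially with arbitrary $A$.

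The delicate point is making the perturbation argument rigorous. One must show that any point of $A$ off the line, or any nonorthogonality, can be used to find a point of $\textup{conv}(A+B)$ at distance strictly larger than $\sqrt{d(A)^2+d(B)^2}$ from $A+B$, or else contradicts the extremality of $x$. I would try first to handle the case where $A$ and $B$ are both contained in lines, computing $d(A+B)$ explicitly for lines at an angle $\theta$ and showing strict inequality when $\theta\neq\pi/2$. After that I would show that noncollinear $A$ gives strict inequality by comparing with a collinear subset containing the near-points.
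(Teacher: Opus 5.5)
Your reduction to $E=B_2^2$ via $d^{(E)}(S)=d(TS)$ and your sufficiency argument (Pythagorean splitting over the rectangle $\textup{conv}(A)+\textup{conv}(B)$) are correct and match the paper. Your final outline, lines first and then non-collinear sets, also has the same shape as the paper's Lemmas~\ref{lemma:one_dimensional_one_non_convex} and~\ref{lemma:both_nonconvex_oneset_two_dimensional}.

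The necessity direction, however, is only a plan, and the mechanism you propose for it is wrong. Your ``core estimate'' $d(x,A+B)^2\le d(a,A)^2+d(b,B)^2$ for $x=a+b$ fails even at a maximizer. Take $A=\{0,2u\}$ and $B=\{0,2v\}$ with unit vectors satisfying $c:=\langle u,v\rangle=0.9$. Then $d(A+B)$ is attained at the circumcenter $x=(u+v)/(1+c)$ of the triangle $0,2u,2v$, where $d(x,A+B)^2=2/(1+c)\approx 1.05$. The (unique) decomposition instead gives $d(a,A)^2+d(b,B)^2=2/(1+c)^2\approx 0.55$.

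The cross term cannot simply be ``discarded'': its sign depends on which corner you compare with, and switching corners changes the lengths. The paper compares with the global constant $d(A)$ instead. Write $x=ru+sv$ with $r_1<r<r_2$, $\gamma:=r_2-r_1\le 2d(A)$, $t:=r-r_1$, and $q:=s-s_j$ for the nearest $B$-endpoint $s_j$. Suppose both corners $r_1u+s_jv$ and $r_2u+s_jv$ were at squared distance at least $d(A)^2+q^2$. This gives $t^2+2tqc\ge d(A)^2$ and $(\gamma-t)^2-2(\gamma-t)qc\ge d(A)^2$. Dividing by $t$, resp.\ $\gamma-t$, and adding yields $\gamma\ge 4d(A)^2/\gamma$, which forces $\gamma=2d(A)$, $t=d(A)$ and $qc=0$. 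This is the idea missing from your step ``compute $d(A+B)$ explicitly for lines at angle $\theta$'', which cannot literally be done for arbitrary compact subsets of two lines.

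For non-collinear sets your argument is only asserted (``should make one of the inequalities strict''). Moreover, ``comparing with a collinear subset containing the near-points'' is not valid as stated, since passing to a subset $\tilde A\subset A$ can increase $d$. Two ingredients are needed.

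First, split the points $x$ of $\textup{conv}(A+B)$ into two kinds.
Points $x\in(\textup{conv}(A)+B)\cup(A+\textup{conv}(B))$ satisfy $d(x,A+B)\le\max\{d(A),d(B)\}$. This is strictly below $\sqrt{d(A)^2+d(B)^2}$ because both sets are nonconvex. For all other $x$, \cite[Lemma 3.8]{meyer2} provides two-point sets $\tilde A\subset A$, $\tilde B\subset B$ with $x\in\textup{conv}(\tilde A+\tilde B)$, $d(\tilde A)\le d(A)$ and $d(\tilde B)\le d(B)$. The one-dimensional analysis then applies to $\tilde A,\tilde B$.

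Second, one configuration gives no strictness from this: $\tilde A+\tilde B$ is the vertex set of an orthogonal $2d(A)\times 2d(B)$ rectangle with $x$ at its center. Here you must use the two-dimensionality of, say, $B$. Choose $c_\varepsilon=d(B)v+\delta\varepsilon u\in\textup{conv}(B)$ and a point $p_\varepsilon\in B$ with $\|p_\varepsilon-c_\varepsilon\|_2\le d(B)$. Then show that $x$ is strictly closer than $\sqrt{d(A)^2+d(B)^2}$ to $p_\varepsilon$ or to $2d(A)u+p_\varepsilon$. The point is to apply the definition of $d(B)$ at points of $\textup{conv}(B)$ just off the segment, not merely to locate a point of $B$ off the line.

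Finally, your remark that equality holds trivially when $B$ is convex is false. For $B$ the unit disk and $A=\{(-1,0),(1,0)\}$ one gets $d(A+B)=\sqrt2-1<1=d(A)$.
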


As a follow-up of our proof of Theorem \ref{theorem:equality_conditions}, we also make the following observation about the Minkowski sum of a convex set and a nonconvex set. Note that by the dimension of a set $S\subset\mathbb{R}^n$, we mean the dimension of its affine hull $\textup{aff}(S)$. We use $\textup{dim}(S)$ to denote the affine dimension of $S$.

\begin{proposition}\label{proposition:convex+nonconvex}
    Let $E\subset\mathbb{R}^2$ be an ellipse centered at $0$. If $A\subset\mathbb{R}^2$ is a convex set containing more than one point, and if $B\subset\mathbb{R}^2$ is a nonconvex set such that
    \begin{align*}
        d^{(E)}(A+B)=d^{(E)}(B),
    \end{align*}
    then either $\textup{aff}(A)$ and $\textup{aff}(B)$ are $E$-orthogonal lines, or $\textup{dim}(B)=2$.
\end{proposition}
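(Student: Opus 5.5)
We reduce to the Euclidean case and then study where the maximal distance of $A+B$ is attained. Let $T$ be an invertible linear map with $TE=B_2^2$. Then $d^{(E)}(S)=d(TS)$, $T(A+B)=TA+TB$, affine hulls map to affine hulls, and $E$-orthogonality becomes ordinary orthogonality. So it suffices to take $E=B_2^2$. Since $A$ is convex, $d(A)=0$.

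Since $B$ is nonconvex, $d(B)>0$. We need a compactness convention: either assume $B$ compact, as in Theorem~\ref{theorem:equality_conditions}, or work with closures, since $d(S)=d(\overline{S})$ when $\textup{conv}$ and $\inf$ are handled appropriately. We then argue by contradiction. Suppose $\textup{dim}(B)\le 1$ (so $\textup{dim}(B)=1$, as $B$ is nonconvex) and that $\textup{aff}(A)$ and $\textup{aff}(B)$ are not orthogonal lines. We show $d(A+B)<d(B)$. Together with the general inequality $d(A+B)\le d(B)$, which is the bound \eqref{eq:dyn_farkhi_conjecture} with $d(A)=0$ (or follows directly), this is the contradiction.

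\textbf{Setup.} Let $\textup{aff}(B)=\ell$ with direction $u$. Then $B$ is a closed subset of the line $\ell$ with gaps. Its distance from convex hull is $d(B)=g/2$, where $g$ is the length of the largest gap $(b_1,b_2)$ in $\textup{conv}(B)\setminus B$. Take a point $x=a+y\in\textup{conv}(A+B)=A+\textup{conv}(B)$ with $a\in A$ and $y\in\textup{conv}(B)$. The set $A+B$ contains $a'+B$ for every $a'\in A$, so we may slide $a$.

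\textbf{Case 1: $\textup{aff}(A)$ is a line with direction $v$ not orthogonal to $u$.} Either $v\parallel u$, or $v$ makes an acute nonzero angle with $u$, so $\langle u,v\rangle\neq0$. If $v\parallel u$, then $A$ is a segment $[p,p+\lambda u]$ with $\lambda>0$, and $A+B$ is a union of translated copies of $B$ along $\ell+p$. Its gaps are gaps of $B$ shortened by $\lambda$ or filled, so $d(A+B)\le\max(g-\lambda,0)/2<g/2$. If $\langle u,v\rangle\neq0$ but $v\not\parallel u$, take $x=a+y$ with $y$ in a gap $(b_1,b_2)$. Moving $a$ to $a\pm\varepsilon v$ inside $A$, choose the sign so that the nearer endpoint $b_i$ of the gap becomes closer. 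Since $a$ can move in one direction along $A$, or in both if $a$ is interior, a first-order computation shows that the distance from $x$ to $a'+b_i$ is $\|y-b_i-\varepsilon v\|$, whose derivative in $\varepsilon$ is $-\langle (y-b_i)/|y-b_i|, v\rangle\neq 0$. This derivative decreases the distance for the appropriate sign, provided $a$ is not an endpoint of $A$ with the wrong orientation.

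\textbf{Case 2: $A$ is two-dimensional.} Then $A$ contains a small disk, so $A+B$ contains a neighborhood of segments near $\ell$. A similar perturbation in every direction around $a$ strictly decreases the distance.

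\textbf{Main obstacle: endpoint configurations.} The hard step is Case 1 when $a$ is an endpoint of the segment $A$, because then only one direction of motion is available. To handle it, parametrize the worst point explicitly. Write $A=[p,q]$ and $w=q-p$ with $\langle w,u\rangle\neq0$. Then $\textup{conv}(A+B)$ is a parallelogram, and $A+B$ is the union of the segments $b+[p,q]$ over $b\in B$. For $b_1<b_2$ bounding a gap, the uncovered region between the segments $b_1+A$ and $b_2+A$ is a parallelogram strip of horizontal width $g$. Its farthest point from $A+B$ is its center, at distance half the perpendicular width, which is $(g/2)\sin\theta<g/2$, where $\theta$ is the angle between $u$ and $w$. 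Near the corners of the strip, the distance to the endpoints $b_i+p$ and $b_i+q$ is also at most this value. So $d(A+B)=(g/2)\sin\theta$, which equals $g/2$ exactly when $\theta=\pi/2$. This explicit computation reduces Case 1 to elementary geometry and gives the contradiction. Case 2 follows from the same computation, since $A$ contains a nondegenerate segment in some direction not orthogonal to $u$ and enlarging $A$ only decreases $d$ relative to $d(B)$.
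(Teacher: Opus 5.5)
Your reduction to $E=B_2^2$ and your plan of proving $d(A+B)<d(B)$ when $\dim(B)=1$ and the hulls are not orthogonal match the paper. The paper gets this from Lemma~\ref{lemma:convex_nonconvex_two_one} when $A$ is two-dimensional, and from Lemma~\ref{lemma:one_dimensional_one_non_convex} with $d(A)=0$ when $A$ is a segment.

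The gap is in the step you call the main obstacle: the formula $d(A+B)=(g/2)\sin\theta$ is false. The foot of the perpendicular from the center of the strip $(b_1,b_2)+[p,q]$ to the line through $b_1+[p,q]$ lies on that segment only when $|q-p|\ge g|\cos\theta|$. For example, take $B=\{0,e_1\}$, $w=(\cos\theta,\sin\theta)$ with $0<\theta<\pi/2$, and $A=[0,\lambda w]$ with $0<\lambda<\cos\theta$. The center $(e_1+\lambda w)/2$ is then at distance $\tfrac{1}{2}|e_1-\lambda w|=\tfrac{1}{2}\sqrt{\sin^2\theta+(\cos\theta-\lambda)^2}>\tfrac{1}{2}\sin\theta$ from $A+B$. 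Moreover, $d(A+B)\to d(B)=1/2$ as $\lambda\to 0$, by continuity of $d$ in the Hausdorff metric. So no bound $c(\theta)\,d(B)$ with $c(\theta)<1$ independent of $A$ can hold, and your claim about the corners fails.

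You route both the endpoint configurations of Case~1 and Case~2 through this computation. Case~2 also relies on ``enlarging $A$ only decreases $d$'', which is not true globally, because $\textup{conv}(A+B)$ is larger than $\textup{conv}(I+B)$. So neither case is actually proved.

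The missing idea is that the endpoint issue is not an obstacle; this is the last step in the proof of Lemma~\ref{lemma:one_dimensional_one_non_convex}. Write $x=a+y$ with $a\in A$ and $y$ in a gap $(b_1,b_2)$. If $y$ is not the midpoint of a gap of maximal length $g$, then $d(x,A+B)\le\min_i|y-b_i|<g/2$ with no perturbation needed. If it is the midpoint, then $y-b_1$ and $y-b_2$ are opposite vectors of length $g/2$. Take whichever single direction $\delta v$ is available for moving $a$ inside $A$. One always exists since $A$ has more than one point; for two-dimensional $A$, pick one not orthogonal to $u$, which is possible because the directions from $a$ into $A$ contain an open set. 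There is then $j$ with $\delta\langle y-b_j,v\rangle>0$, and $|y-b_j-\varepsilon\delta v|^2=g^2/4-2\varepsilon\delta\langle y-b_j,v\rangle+\varepsilon^2<g^2/4$ for small $\varepsilon>0$. In short, choose the endpoint to fit the direction, not the direction to fit the endpoint.

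This gives $d(x,A+B)<d(B)$ for every $x$. You still need the compactness step you left out: $x\mapsto d(x,A+B)$ is continuous on the compact set $\textup{conv}(A+B)$, so its supremum is attained and the strict inequality passes to $d(A+B)$.

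Finally, compactness of $B$ must genuinely be assumed; the paper does so tacitly through its lemmas. Your alternative of passing to closures does not work. The set $B=[0,1)\cup(1,2]$ on a line is nonconvex, and with $A=[0,1]$ on the same line it gives $d(B)=0=d(A+B)$ with non-orthogonal hulls.
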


Note that from Theorem \ref{theorem:equality_conditions}, if $A\subset\mathbb{R}^2$ is a compact nonconvex set, then
\begin{align*}
    d\left(\frac{A+A}{2}\right)<\frac{1}{\sqrt{2}}d(A),
\end{align*}
which is the Dyn--Farkhi conjecture with $A=B$, but the bound is strict. The fact that the above bound is strict is of special interest. We conjecture the following improvement.

\begin{conjecture}\label{conjecture:improved_constant_symmetric_case}
    Let $A\subset\mathbb{R}^2$ be a nonconvex compact set. Then
    \begin{align*}
        d\left(\frac{A+A}{2}\right)\leq \frac{1}{2}d(A).
    \end{align*}
\end{conjecture}
We note that in the paper \cite{vanHintum}, van Hintum posed a version of this question for arbitrary dimension $n\geq 2$.

Aside from the above-mentioned work, the Hausdorff distance has been studied in the setting of the $\ell_p$-norm in \cite{M-2026}.

\subsection{Acknowledgements} We thank Matthieu Fradelizi, Robert Fraser, and Buma Fridman for helpful discussions. We thank the laboratory LAMA at University Gustave Eiffel for their hospitality. This work was completed under the financial support of the National Science Foundation through the MSPRF program (award number: 2502794). The author used chat GPT for assistance with proofreading and exposition of the manuscript.

\section{Proof of Theorem \ref{theorem:norm_rigidity_theorem}}

The following lemma is a direct consequence of John's ellipsoid theorem. 

\begin{lemma}\label{lemma:boundary_four_points}
    Let $K\subset\mathbb{R}^2$ be a convex body that is symmetric about the origin and let $\mathcal{E}(K)$ be its John ellipse. Then the intersection
    \begin{align*}
        \partial K\cap \partial \mathcal{E}(K)
    \end{align*}
    contains at least four points.
\end{lemma}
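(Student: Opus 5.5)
We will use John's theorem in its contact-point form. Here $\mathcal{E}(K)$ is the ellipse of maximal area contained in $K$. Since $K$ is origin symmetric, $\mathcal{E}(K)$ is centered at $0$: if $E\subset K$ is maximal, then so is $-E$. By uniqueness of the maximal ellipse we get $E=-E$.

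\textbf{Reduction.} Choose an invertible linear map $T$ with $T\mathcal{E}(K)=B_2^2$. Linear maps preserve inclusion, boundaries, and area ratios, so $B_2^2$ is the John ellipse of $TK$. Moreover $T$ maps $\partial K\cap\partial\mathcal{E}(K)$ bijectively onto $\partial(TK)\cap S^1$. It therefore suffices to treat the case $\mathcal{E}(K)=B_2^2$.

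\textbf{Applying John's theorem.} In this normalized position, John's theorem supplies contact points $u_1,\dots,u_m\in\partial K\cap S^1$ and weights $c_i>0$ with
\begin{align*}
    \sum_{i=1}^m c_i\, u_i\otimes u_i = I_2, \qquad \sum_{i=1}^m c_i u_i=0.
\end{align*}
Taking traces gives $\sum c_i=2$. More importantly, the $u_i$ must span $\mathbb{R}^2$, since otherwise the left-hand side would have rank at most one. So there exist two contact points $u,v$ that are linearly independent.

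\textbf{Counting.} The set $\partial K\cap S^1$ is invariant under $x\mapsto -x$, because both $K$ and $B_2^2$ are symmetric. Hence $u,-u,v,-v$ all lie in the intersection. These four points are distinct: $u\neq -u$ since $u\neq 0$, and $v\neq\pm u$ by linear independence. This gives at least four points.

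\textbf{Possible gap.} The only delicate step is citing John's theorem with the decomposition of the identity in the symmetric case. If one prefers to avoid it, a direct argument also works. Suppose all contact points lay on a single line $\pm u$. Apply a linear map that fixes $u$, stretches the direction orthogonal to $u$ by a factor $1+\varepsilon$, and shrinks the $u$-direction by a factor $(1+\varepsilon)^{-1/2}$ (or a similar combination), chosen so that the area strictly increases. By compactness, the image of $B_2^2$ stays inside $K$ for small $\varepsilon$, because the only contact is near $\pm u$, where the map pushes the ellipse inward. This contradicts maximality, so there must be a second contact direction.

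Verifying "stays inside $K$ near $\pm u$" requires a short compactness argument. I expect this to be the main technical point if John's theorem is not quoted directly.
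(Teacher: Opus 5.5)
Your proposal is correct and follows the same route as the paper, which records the lemma as a direct consequence of John's theorem without further detail. Your argument fills in that citation in the standard way: the decomposition $\sum c_i u_i\otimes u_i=I_2$ forces two linearly independent contact points, and origin symmetry of $\partial K\cap S^1$ then gives the four distinct points $\pm u,\pm v$. The optional direct argument is also sound once the map is read as diagonal in the basis $u,u^\perp$, so that it fixes the line through $u$ rather than $u$ itself; near $\pm u$ the new ellipse then lies inside $B_2^2$, and elsewhere there is a uniform positive gap between $S^1$ and $\partial K$.
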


For a convex body $K\subset\mathbb{R}^2$ and $c\in \textup{int}(K)$, define the function $\kappa:\mathbb{R}\rightarrow\partial K$ by
\begin{align*}
    \kappa(\theta):=\frac{\hspace{-0.65cm}(\cos(\theta),\sin(\theta))}{\|(\cos(\theta),\sin(\theta))\|_{K-c}}+c.
\end{align*}
If $x_1,x_2\in \partial K$ are distinct, choose $\theta_1$ such that $\kappa(\theta_1)=x_1$, and choose the smallest $\theta_2>\theta_1$ such that $\kappa(\theta_2)=x_2$. Then the open arc connecting $x_1$ and $x_2$ in the counterclockwise direction is the set
\begin{align*}
    \textup{arc}_K(x_1,x_2):=\{\kappa(\theta):\theta_1< \theta< \theta_2\}.
\end{align*}

The next lemma follows directly from the definition of $\textup{arc}_K(x_1,x_2)$.

\begin{lemma}\label{lemma:arc_lemma}
    Let $K\subset\mathbb{R}^2$ be a convex body and let $E$ be a compact proper subset of $\partial K$ that contains at least two points. Then there exist distinct $x_1,x_2\in E$ such that 
    \begin{align*}
        \textup{arc}_K(x_1,x_2)\cap E=\varnothing.
    \end{align*}
    Moreover, if $K$ and $E$ are symmetric about the origin and if $E$ contains at least three points, then the points can be chosen so that
    \begin{align*}
        x_2\neq -x_1,\qquad x_1\in \textup{arc}_K(-x_2,x_2).
    \end{align*}
\end{lemma}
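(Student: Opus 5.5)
Parametrize $\partial K$ by the angle map $\kappa$ (with $c=0$ when $K$ is symmetric). Then $\partial K$ is homeomorphic to a circle via $\theta\mapsto\kappa(\theta)$, which is continuous and $2\pi$-periodic. The plan is to transfer everything to the circle $\mathbb{R}/2\pi\mathbb{Z}$ and argue with gaps.

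\textbf{Existence of an empty arc.} Let $\Theta=\kappa^{-1}(E)$ modulo $2\pi$. This is a compact proper subset of the circle with at least two points. Its complement is a nonempty open subset of the circle, so it is a countable disjoint union of open arcs (components). None of these components is the whole circle minus a point, because $\Theta$ has at least two points. Pick any component $(\theta_1,\theta_2)$. Its endpoints lie in $\Theta$ by compactness, and they are distinct modulo $2\pi$. Setting $x_i=\kappa(\theta_i)$ gives distinct points of $E$, and by the definition of $\textup{arc}_K(x_1,x_2)$ (the smallest $\theta_2>\theta_1$ is exactly the component's endpoint) this arc is disjoint from $E$.

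\textbf{The symmetric refinement.} Now assume $K=-K$ and $E=-E$. Then $\kappa(\theta+\pi)=-\kappa(\theta)$, so $\Theta$ is invariant under the shift by $\pi$. I want a gap $(\theta_1,\theta_2)$ with $\theta_2-\theta_1<\pi$. First, $\theta_2-\theta_1\le\pi$ always: the shifted gap $(\theta_1+\pi,\theta_2+\pi)$ is also a gap, and two gaps are either equal or disjoint. Equality would force $\theta_1\equiv\theta_1+\pi$, which is impossible. If the length is exactly $\pi$, then $\theta_2\equiv\theta_1+\pi$, and the gap together with its antipode covers the circle except $\{\theta_1,\theta_1+\pi\}$. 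Then $E=\{x_1,-x_1\}$, contradicting $|E|\ge3$. So every gap has length strictly less than $\pi$. This gives $x_2\neq -x_1$.

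\textbf{The arc condition.} It remains to check $x_1\in\textup{arc}_K(-x_2,x_2)$. Here $-x_2=\kappa(\theta_2-\pi)$, and the counterclockwise arc from $-x_2$ to $x_2$ consists of the points $\kappa(\theta)$ with $\theta_2-\pi<\theta<\theta_2$. Since $0<\theta_2-\theta_1<\pi$, the angle $\theta_1$ lies in this open interval, so $x_1$ lies on the arc.

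The only delicate point is bookkeeping the choice of $\theta_2$ modulo $2\pi$ to match the paper's definition of arcs. Everything else is the elementary topology of closed subsets of the circle.
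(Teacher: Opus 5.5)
Your proof is correct. The paper gives no argument beyond asserting that the lemma ``follows directly from the definition'' of $\textup{arc}_K$, and your approach is the natural way to fill this in: you pass to the gaps of $\Theta=\kappa^{-1}(E)$ in $\mathbb{R}/2\pi\mathbb{Z}$ and use the invariance of $\Theta$ under the shift by $\pi$ to show that every gap has length strictly less than $\pi$ when $E$ has at least three points. One small wording slip: the endpoints of a gap lie in $\Theta$ because $\Theta$ is closed and the gap is a maximal open arc of its complement, not by compactness as such.
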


We also need the following key observation about the intersection of a line through the origin with the boundary of an origin symmetric convex body.

\begin{lemma}\label{lemma:formula_intersection_with_boundary}
    Let $K\subset\mathbb{R}^2$ be a convex body that is symmetric about the origin, and let $p,x\in \partial K$ be such that $-x,p,x$ are distinct and occur in counterclockwise order. Let $L$ be the line that passes through the origin and is parallel to the intervals $[p,-x]$ and $[x,-p]$. Then $L$ intersects $\textup{arc}_K(p,x)$ at exactly one point $k$, and
    \begin{align*}
        p+x=\|p+x\|_Kk.
    \end{align*}
\end{lemma}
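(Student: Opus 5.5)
The plan is to use the symmetry $K=-K$ to turn the chords $[p,-x]$ and $[x,-p]$ into opposite sides of a parallelogram inscribed in $K$, and then read off $p+x$ as a multiple of a boundary point on $L$.

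First, the four points $p,x,-p,-x$ lie on $\partial K$. Since $-x,p,x$ occur counterclockwise, so do $-x,p,x,-p$. The quadrilateral $Q$ with these vertices is therefore a parallelogram centered at $0$: its diagonals $[p,-p]$ and $[x,-x]$ bisect each other at the origin. Its sides $[p,-x]$ and $[x,-p]$ are parallel, each being a translate of the direction $p+x$, because $p-(-x)=p+x$ and $-p-x=-(p+x)$. So $L=\mathbb{R}(p+x)$, and $p+x\neq 0$ since $x\neq -p$ (the points are distinct, and $p=-x$ would collapse the order).

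Next, identify the unique intersection point. The ray $\{t(p+x):t>0\}$ meets $\partial K$ in exactly one point, because $0\in\textup{int}(K)$ and $K$ is convex. That point is $k:=(p+x)/\|p+x\|_K$, which gives $p+x=\|p+x\|_K k$ immediately. The opposite ray meets $\partial K$ only at $-k$.

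It remains to check that $k\in\textup{arc}_K(p,x)$ and $-k\notin\textup{arc}_K(p,x)$. Consider the open angular sector from direction $p$ to direction $x$ counterclockwise. Since $p$ lies strictly between $-x$ and $x$, this sector has angle less than $\pi$. The vector $p+x$ is a positive combination of $p$ and $x$, which are linearly independent (as $p\neq\pm x$), so it lies strictly inside this sector. Radial projection $\kappa$ (with $c=0$) maps this sector bijectively onto $\textup{arc}_K(p,x)$, so $k$ is in the arc. The direction of $-k$ lies in the antipodal sector, which is disjoint from the original one because the angle is less than $\pi$. Hence $L\cap\textup{arc}_K(p,x)=\{k\}$ exactly, since $L\cap\partial K=\{k,-k\}$.

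The main point needing care is the angular bookkeeping: confirming that the counterclockwise order forces the sector from $p$ to $x$ to have angle strictly less than $\pi$. I would handle it by parametrizing with $\kappa$. If $x=\kappa(\alpha)$, then $-x=\kappa(\alpha-\pi)$, and $p=\kappa(\beta)$ with $\alpha-\pi<\beta<\alpha$. The arc from $p$ to $x$ thus corresponds to $(\beta,\alpha)$, an interval of length less than $\pi$, and the direction of $p+x$ has angle strictly inside $(\beta,\alpha)$.
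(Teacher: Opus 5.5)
Your proof is correct. It reaches the conclusion by a somewhat different route from the paper's.

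The paper argues in this order:
\begin{enumerate}
\item It traps $\textup{arc}_K(p,x)$ in the strip between the lines through $[p,-x]$ and $[x,-p]$.
\item Since $p$ and $x$ lie on opposite sides of the midline $L$, continuity of $\kappa$ (an intermediate value argument) gives some $k\in L\cap\textup{arc}_K(p,x)$.
\item Uniqueness follows because $L\cap\partial K$ has only two points.
\item Only then does it note $p+x\in L$, and it asserts without detail that the counterclockwise order forces $p+x=\beta k$ with $\beta>0$.
\end{enumerate}

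You instead define $k:=(p+x)/\|p+x\|_K$, so the identity $p+x=\|p+x\|_Kk$ holds by construction. All the content then sits in one angular computation. With $c=0$ you have $\kappa(\theta+\pi)=-\kappa(\theta)$. Writing $x=\kappa(\alpha)$, the hypothesis becomes $p=\kappa(\beta)$ with $\alpha-\pi<\beta<\alpha$, and $\textup{arc}_K(p,x)=\kappa((\beta,\alpha))$. A positive combination of two independent vectors whose angles differ by less than $\pi$ has angle strictly between them. This one fact shows that $k$ lies on the arc, that $-k$ does not, and that $\beta>0$, which is the step the paper leaves unproved. The strip and the continuity argument therefore become unnecessary.

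What the paper's version buys is a geometric picture: the arc is confined to the strip and must cross its midline. It still needs your orientation fact at the final step to rule out $p+x$ being a negative multiple of $k$.

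The one implicit point in your write-up is taking $c=0$ in the definition of $\kappa$. This is harmless, since the counterclockwise arcs of $\partial K$ do not depend on which interior point is used for the radial parametrization.
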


\begin{proof}
    The arc $\textup{arc}_K(p,x)$ must lie in the strip that is bounded by the parallel lines through the intervals $[p,-x]$ and $[x,-p]$. Since $p$ and $x$ lie on opposite sides of the midline $L$ of the strip, and since $\kappa$ is continuous, there exists a point $k\in L\cap \textup{arc}_K(p,x)$. The line $L$ intersects $\partial K$ at exactly two points $k$ and $\tilde{k}$, and $\tilde{k}$ lies in the complementary arc $\textup{arc}_K(x,p)$, so the point $k$ is unique.

    Since $[p,-x]$ is parallel to $L$, we have
    \begin{align*}
        p+x\in L=\textup{span}(k).
    \end{align*}
    The fact that $-x,p,x$ occur on $\partial K$ in counterclockwise order implies that $p+x=\beta k$ for some $\beta>0$. Because $k\in \partial K$, the intersection $L\cap \lambda K$ is exactly $[-\lambda k,\lambda k]$. It follows that the smallest $\lambda>0$ such that $p+x\in \lambda K$ is $\beta$, and therefore $\beta=\|p+x\|_K$.
\end{proof}

\begin{proof}[Proof of Theorem \ref{theorem:norm_rigidity_theorem}]
    Suppose that $K$ is an ellipse. Then there exists an invertible linear transformation $T$ such that $TK=B_2^2$. Observe that 
    \begin{align*}
        d^{(K)}(S)=d(TS)
    \end{align*}
    for any set $S$, and that $T(A+B)=T(A)+T(B)$. Then by \eqref{eq:dyn_farkhi_conjecture}, we have
    \begin{align*}
        d^{(K)}(A+B)^2&=d(T(A)+T(B))^2\\
        &\leq d(T(A))^2+d(T(B))^2\\
        &=d^{(K)}(A)^2+d^{(K)}(B)^2.
    \end{align*}

    To prove the other direction, suppose that $K$ is not an ellipse and let $\mathcal{E}(K)$ be its John ellipse. By Lemma \ref{lemma:boundary_four_points}, the set $E:=\partial K\cap \partial \mathcal{E}(K)$ contains at least three points. Then by Lemma \ref{lemma:arc_lemma}, there exist points $p,x\in E$ such that $\textup{arc}_K(p,x)\cap E=\varnothing$, $p\neq -x$, and $p\in \textup{arc}_K(-x,x)$. In particular, the points $-x,p,x$ are distinct and occur on $\partial K$ in counterclockwise order, so by Lemma \ref{lemma:formula_intersection_with_boundary}, there exists $k\in \partial K\backslash \mathcal{E}(K)$ with $p+x=\|p+x\|_Kk$. Since $k\notin \mathcal{E}(K)$, we have $\|k\|_{\mathcal{E}(K)}>\|k\|_K$. It follows that if we set $u:=p+x$ and $v:=p-x$, then
    \begin{align*}
        \|u\|_{\mathcal{E}(K)}>\|u\|_K,\qquad \|v\|_{\mathcal{E}(K)}\geq \|v\|_K.
    \end{align*}
    The inequality on the right follows directly from the fact that $\mathcal{E}(K)\subset K$. Since $p,x\in \partial \mathcal{E}(K)$, the parallelogram law gives $\|u\|_{\mathcal{E}(K)}^2+\|v\|_{\mathcal{E}(K)}^2=4$, and so
    \begin{align}\label{eq:sharp_bound_ellipse_nonellipse}
        \|u\|_K^2+\|v\|_K^2<4.
    \end{align}
    Set $A:=\{x,p\}$ and $B:=\{0,-x-p\}$. Then
    \begin{align*}
        2d^{(K)}(A)=\|v\|_K,\qquad 2d^{(K)}(B)=\|u\|_K.
    \end{align*}
    We also have $A+B=\{x,p,-x,-p\}\subset\partial K$, and $0\in \textup{conv}(A+B)$, so that $d^{(K)}(A+B)\geq 1$. Then from \eqref{eq:sharp_bound_ellipse_nonellipse}, we have
    \begin{align*}
        d^{(K)}(A)^2+d^{(K)}(B)^2=\frac{\|u\|_K^2+\|v\|_K^2}{4}<1\leq d^{(K)}(A+B)^2,
    \end{align*}
    proving that subadditivity fails when $K$ is not an ellipse.
\end{proof}

\section{Proof of Theorem \ref{theorem:equality_conditions}}

\begin{lemma}\label{lemma:one_dimensional_one_non_convex}
    Let $A,B\subset\mathbb{R}^2$ be compact one-dimensional sets such that one of them is nonconvex. Then
    \begin{align*}
        d(A+B)^2=d(A)^2+d(B)^2
    \end{align*}
    if and only if $\textup{aff}(A)$ and $\textup{aff}(B)$ are orthogonal lines. 
\end{lemma}

\begin{proof}
    By translating $A$ and $B$ independently, we may assume that $0\in A\cap B$, and write $\textup{aff}(A)=\textup{span}(u)$ and $\textup{aff}(B)=\textup{span}(v)$ for unit vectors $u,v$.

    If $u\bot v$, then choose $ru\in \textup{conv}(A)$ and $sv\in \textup{conv}(B)$ that attain the supremum in the definition of $d(A)$ and $d(B)$. Then
    \begin{align*}
        d(ru+sv,A+B)^2=\inf_{r_1u\in A,s_1v\in B}(|r-r_1|^2+|s-s_1|^2)=d(A)^2+d(B)^2.
    \end{align*}
    Equality follows immediately.

    Next, assume that $c:=\langle u,v\rangle \neq 0$. We will show that the distance from any $x\in \textup{conv}(A+B)$ to $A+B$ is strictly smaller than $d(A)^2+d(B)^2$. First, suppose that both sets are nonconvex, and write $x=ru+sv$. If $x\in \textup{conv}(A)+B$ or if $x\in A+\textup{conv}(B)$, then
    \begin{align*}
        d(x,A+B)^2\leq \max\{d(A)^2,d(B)^2\}<d(A)^2+d(B)^2.
    \end{align*}
    Otherwise, choose
    \begin{align*}
        r_1<r<r_2,\qquad r_iu\in A,\qquad r_2-r_1\leq 2d(A),
    \end{align*}
    and similarly, choose $s_1< s< s_2$, $s_iv\in B$, $s_2-s_1\leq 2d(B)$. Let $s_j$ be the closest to $s$, and set
    \begin{align*}
        t:=r-r_1,\qquad \gamma:=r_2-r_1,\qquad q:=s-s_j.
    \end{align*}
    Then $0<t<\gamma \leq 2d(A)$ and $0<|q|\leq d(B)$. We claim that the squared distance from $x$ to at least one of the points $r_1u+s_jv$ and $r_2u+s_jv$ is strictly less than $d(A)^2+q^2$. If not, then
    \begin{align*}
        t^2+2tqc\geq d(A)^2,\qquad (\gamma-t)^2-2(\gamma-t)qc\geq d(A)^2.
    \end{align*}
    Divide the left by $t$, the right by $\gamma-t$, and then add to get
    \begin{align*}
        \gamma\geq d(A)^2\left(\frac{1}{t}+\frac{1}{\gamma-t}\right)\geq\frac{4d(A)^2}{\gamma}.
    \end{align*}
    Then $\gamma\geq 2d(A)$ and as a result the above inequalities are all equality. This forces $\gamma=2d(A)$, $t=d(A)$, and $qc=0$, which is a contradiction. Therefore,
    \begin{align*}
        d(x,A+B)^2<d(A)^2+q^2\leq d(A)^2+d(B)^2.
    \end{align*}

    To complete the proof, we consider the case where $A$ is convex and $B$ is nonconvex, and the other case follows by a symmetry argument. Then $x=ru+sv$, with $ru\in A$. If $sv\in B$, the distance from $x$ to $A+B$ is zero. Otherwise, choose $s_1,s_2$ as above. The closest point $s_jv$ has distance to $B$ strictly less than $d(B)$ unless $s$ is the midpoint between $s_1,s_2$ and $s_2-s_1=2d(B)$. In this case, choose another point $ru+\varepsilon \delta u$ in $A$ in direction $\delta u$, $\delta\in \{-1,1\}$ at small distance $\varepsilon$ from $ru$, and choose $s_jv$ so that $\delta(s-s_j)c>0$. If $\varepsilon <2|(s-s_j)c|$, then
    \begin{align*}
        \|ru+sv-((ru+\varepsilon \delta u )+s_jv)\|_2^2=\varepsilon^2+d(B)^2-2\varepsilon|(s-s_j)c|<d(B)^2.
    \end{align*}
    By the continuity of the norm $\|\cdot \|_2$ and the compactness of $A+B$, the distance $d(A+B)$ is attained for some $x\in \textup{conv}(A+B)$. Therefore, the pointwise strict inequality that we proved above is actually a strict inequality for $d(A+B)$, so equality is impossible.
\end{proof}

\begin{lemma}\label{lemma:convex_nonconvex_two_one}
    Let $A,B\subset\mathbb{R}^2$ be compact sets such that one of them is convex and two-dimensional, and the other nonconvex and one-dimensional. Then
    \begin{align*}
        d(A+B)<\max\{d(A),d(B)\}.
    \end{align*}
\end{lemma}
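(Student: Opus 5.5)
The plan is to reduce to a pointwise strict estimate and then use compactness, following the same scheme as the last part of the proof of Lemma \ref{lemma:one_dimensional_one_non_convex}. Since Minkowski addition is commutative, I may assume that $A$ is convex and two-dimensional and that $B$ is nonconvex and one-dimensional.

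\textbf{Reduction.} Because $A$ is convex, $d(A)=0$, so the claim becomes $d(A+B)<d(B)$. Translating $B$, write $\textup{aff}(B)=\textup{span}(u)$ with $u$ a unit vector. Since $\textup{conv}(A+B)=A+\textup{conv}(B)$, every $x\in\textup{conv}(A+B)$ can be written as $x=a+y$ with $a\in A$ and $y=su\in\textup{conv}(B)$.

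\textbf{Easy cases.} If $y\in B$, then $x\in A+B$ and $d(x,A+B)=0$. Otherwise, as in Lemma \ref{lemma:one_dimensional_one_non_convex}, choose the consecutive points $s_1<s<s_2$ with $s_iu\in B$ and $s_2-s_1\le 2d(B)$. Let $s_j$ be the one closest to $s$. Then $|s-s_j|\le d(B)$, with equality only if $s$ is the midpoint of $[s_1,s_2]$ and $s_2-s_1=2d(B)$. If the inequality is strict, then $d(x,A+B)\le \|a+y-(a+s_ju)\|_2<d(B)$, and this case is done.

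\textbf{Midpoint case (the main step).} Here $|s-s_1|=|s-s_2|=d(B)$, and I must move within $A$. Since $A$ is two-dimensional, it contains a point $c\neq a$ such that $c-a$ is not orthogonal to $u$. To see this, take $c$ in the interior of $A$; if $c-a\perp u$, replace $c$ by a nearby interior point. By convexity, $a+\varepsilon w\in A$ for all $\varepsilon\in[0,1]$, where $w:=c-a$. Choose $j\in\{1,2\}$ so that $\langle w,(s-s_j)u\rangle>0$. This is possible because $s-s_1$ and $s-s_2$ have opposite signs and $\langle w,u\rangle\neq 0$. Then
\begin{align*}
\|a+su-(a+\varepsilon w+s_ju)\|_2^2=d(B)^2-2\varepsilon\langle w,(s-s_j)u\rangle+\varepsilon^2\|w\|_2^2<d(B)^2
\end{align*}
for all sufficiently small $\varepsilon>0$. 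Hence $d(x,A+B)<d(B)$ in this case as well.

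The only potential obstacle is that $a$ may lie on the boundary of $A$, for instance at a corner, where the admissible directions form only a narrow cone. The argument above handles this, because it needs just one admissible direction with nonzero $u$-component. The two-sided choice of $s_j$ then supplies the correct sign automatically.

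\textbf{Conclusion.} We have shown $d(x,A+B)<d(B)$ for every $x\in\textup{conv}(A+B)$. The set $\textup{conv}(A+B)$ is compact, and $x\mapsto d(x,A+B)$ is continuous because $A+B$ is compact. So the supremum defining $d(A+B)$ is attained at some point, and therefore $d(A+B)<d(B)=\max\{d(A),d(B)\}$.
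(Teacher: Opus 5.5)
Your proof is correct and follows essentially the same route as the paper. The paper picks a nondegenerate segment $I\subset A$ through $a$ whose direction is not orthogonal to $\textup{aff}(B)$, cites Lemma \ref{lemma:one_dimensional_one_non_convex} to get $d(x,A+B)\le d(I+B)<d(B)$, and concludes with the same compactness argument; you instead inline the perturbation computation from the end of that lemma's proof, which gives the strict pointwise bound directly without passing through the lemma's equality statement.
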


\begin{proof}
    Suppose that $A$ is convex and two-dimensional and $B$ is nonconvex and one-dimensional. If $x=a+b\in A+\textup{conv}(B)$, there exists a nondegenerate interval $I\subset A$ containing $a$ whose direction is not orthogonal to $\textup{aff}(B)$. Apply Lemma \ref{lemma:one_dimensional_one_non_convex} to the interval $I$ and the nonconvex $B$ to get
    \begin{align*}
        d(x,A+B)\leq d(x,I+B)\leq d(I+B)<d(B).
    \end{align*}
    The result follows by applying the same compactness argument at the end of Lemma \ref{lemma:one_dimensional_one_non_convex} and taking the maximum over all $x\in A+\textup{conv}(B)$.
\end{proof}

\begin{lemma}\label{lemma:both_nonconvex_oneset_two_dimensional}
    Let $A,B\subset\mathbb{R}^2$ be compact nonconvex sets, with at least one being two-dimensional. Then
    \begin{align*}
        d(A+B)^2<d(A)^2+d(B)^2.
    \end{align*}
\end{lemma}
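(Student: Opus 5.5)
The plan is to argue pointwise and finish with the compactness argument used at the end of Lemma \ref{lemma:one_dimensional_one_non_convex}. Since $d(\cdot,A+B)$ is continuous and $\textup{conv}(A+B)=\textup{conv}(A)+\textup{conv}(B)$ is compact, $d(A+B)$ is attained at some $x$. It therefore suffices to show that every $x\in\textup{conv}(A)+\textup{conv}(B)$ satisfies $d(x,A+B)^2<d(A)^2+d(B)^2$. Without loss of generality let $A$ be two-dimensional. Both sets are nonconvex and compact, so $d(A)>0$ and $d(B)>0$.

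The first step disposes of the easy points. If $x\in\textup{conv}(A)+B$ or $x\in A+\textup{conv}(B)$, then
\begin{align*}
    d(x,A+B)\leq \max\{d(A),d(B)\},
\end{align*}
and since both distances are positive, $\max\{d(A),d(B)\}^2<d(A)^2+d(B)^2$. Strict inequality holds there exactly as in the first case of Lemma \ref{lemma:one_dimensional_one_non_convex}.

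For the remaining points $x=a+b$, with $a\in\textup{conv}(A)\setminus A$ and $b\in\textup{conv}(B)\setminus B$, I would try to reduce to the one-dimensional analysis of Lemma \ref{lemma:one_dimensional_one_non_convex}, keeping in mind that the decomposition $x=a+b$ is not unique. Write $b$ on a chord $[b_1,b_2]$ with $b_i\in B$ and $\textup{arc}$-type minimality, meaning that $(b_1,b_2)\cap B=\varnothing$ and $|b_2-b_1|\leq 2d(B)$. Similarly, use Carathéodory in the plane to place $a$ in a segment or triangle spanned by points of $A$ whose relative interior misses $A$; the constraint from the definition of $d(A)$ is that every point of that cell lies within $d(A)$ of $A$.

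The key use of two-dimensionality is this. Because $\textup{conv}(A)$ has interior, and because $a$ can be perturbed inside $\textup{conv}(A)$ while $b$ is perturbed oppositely inside $\textup{conv}(B)$, we may choose the decomposition so that the relevant chord of $A$ through $a$ is \emph{not} orthogonal to the chord $[b_1,b_2]$. One concrete option is to pick a chord of the Carathéodory triangle through $a$ whose direction is transverse to the orthogonal complement of $b_2-b_1$. With such a non-orthogonal pair, the computation in Lemma \ref{lemma:one_dimensional_one_non_convex} applies to the two-point sets $\{a_1,a_2\}$ and $\{b_1,b_2\}$: set $t$, $\gamma$, $q$ as there, and assume both candidate points $a_i+b_j$ fail to beat the bound. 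The harmonic-mean argument then forces $\gamma=2d(A)$, $t=d(A)$ and $qc=0$, so $c\neq0$ yields the strict inequality $d(x,A+B)^2<d(A)^2+d(B)^2$. If $a$ lies in a two-dimensional cell, I would instead use the triangle's three vertices directly. For a point $a$ in the interior of that triangle, at least one vertex combination $a_i+b_j$ gives a strictly negative cross term $2\langle a-a_i,\,b-b_j\rangle$, because the vectors $a-a_i$ positively span the plane.

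I expect the main obstacle to be the bookkeeping in that last step. The chord or triangle of $A$ chosen through $a$ must be controlled by $d(A)$, and not merely by its own diameter, so that the bound $d(A)^2+q^2$ really has $d(A)$ in it; the danger is that the chord through $a$ in a good direction might be longer than $2d(A)$. To handle this I would try first the triangle version: the quantity to bound is $\min_i |a-a_i|^2-2\langle a-a_i,\,b-b_j\rangle$. One has $\min_i|a-a_i|\leq d(A)$ only for the nearest point, not for all $i$. So the plan is to compare with the nearest point $a_{i_0}$ and, if its cross term is nonnegative, to move to an adjacent vertex and use the positive spanning property together with the quadratic inequality of Lemma \ref{lemma:one_dimensional_one_non_convex}.
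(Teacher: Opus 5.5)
Your treatment of points in $(\textup{conv}(A)+B)\cup(A+\textup{conv}(B))$ and your closing compactness step are fine. The remaining case, which is the entire content of the lemma, is not proved.

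\textbf{The missing reduction.} The computation of Lemma \ref{lemma:one_dimensional_one_non_convex} needs $x\in\textup{conv}(\{a_1,a_2\}+\{b_1,b_2\})$ with $a_i\in A$, $b_j\in B$, $\|a_2-a_1\|_2\le 2d(A)$ and $\|b_2-b_1\|_2\le 2d(B)$. Nothing in your plan produces such a configuration.
\begin{itemize}
\item Placing $b$ in a gap $[b_1,b_2]$ of $B$ works only when $B$ is one-dimensional. If $B$ is also two-dimensional, $b$ may lie only inside a triangle cell.
\item Your ``chord of the Carath\'eodory triangle through $a$ in a transverse direction'' has its endpoints on the edges of the triangle, not in $A$. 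So the points $a_i+b_j$ need not lie in $A+B$, and the chord's length is not bounded by $2d(A)$.
\item The triangle-vertex argument breaks down exactly where you say it does. The vertex giving a negative cross term may be much farther than $d(A)$ from $a$, and ``moving to an adjacent vertex'' is not an argument.
\end{itemize}
Filling this in is essentially the core of the proof of \eqref{eq:dyn_farkhi_conjecture} in \cite{meyer2}. The paper does not redo it. It imports from \cite{meyer2} (Lemma 3.8 there) the fact that every such $x$ lies in $\textup{conv}(\tilde A+\tilde B)$ for some $\tilde A=\{a_1,a_2\}\subset A$ and $\tilde B=\{b_1,b_2\}\subset B$ with $d(\tilde A)\le d(A)$ and $d(\tilde B)\le d(B)$.

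\textbf{Where two-dimensionality enters.} Your use of two-dimensionality is aimed at the wrong place. Once the reduction is available, the non-orthogonal case is strict by Lemma \ref{lemma:one_dimensional_one_non_convex}. In the orthogonal case, the nearest vertex of the parallelogram already gives $d(x,A+B)^2\le d(A)^2+d(B)^2$. Equality occurs only for the extremal rectangle: $\tilde A=\{0,2d(A)u\}$, $\tilde B=\{0,2d(B)v\}$, $u\perp v$, $x=d(A)u+d(B)v$. This configuration really does attain equality when both sets are one-dimensional, so it is exactly where two-dimensionality must be used.

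You do not need a non-orthogonal chord, and you give no argument that one exists. Instead, let $B$ be the two-dimensional set (the roles are symmetric). Since $d(B)v$ is the midpoint of $[0,2d(B)v]\subset\textup{conv}(B)$, we have $c_\varepsilon=d(B)v+\delta\varepsilon u\in\textup{conv}(B)$ for a suitable sign $\delta$ and all small $\varepsilon>0$. Pick $p_\varepsilon\in B$ with $\|p_\varepsilon-c_\varepsilon\|_2\le d(B)$. Then compare $x$ with the two points $p_\varepsilon$ and $2d(A)u+p_\varepsilon$ of $A+B$. A limiting argument as $\varepsilon\to 0$ gives the strict inequality at $x$.
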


\begin{proof}
    Fix $x\in \textup{conv}(A+B)$. If
    \begin{align*}
        x\in (\textup{conv}(A)+B)\cup (A+\textup{conv}(B)),
    \end{align*}
    then 
    \begin{align*}
        d(x,A+B)^2\leq \max\{d(A)^2,d(B)^2\}<d(A)^2+d(B)^2.
    \end{align*}
    Otherwise, it follows from \cite[Lemma 3.8]{meyer2} (see also \cite{M-2026}) that there exists $\tilde{A}:=\{a_1,a_2\}\subset A$ and $\tilde{B}:=\{b_1,b_2\}\subset B$ such that 
    \begin{align*}
        x\in \textup{conv}(\tilde{A}+\tilde{B}),\qquad d(\tilde{A})\leq d(A),\qquad d(\tilde{B})\leq d(B).
    \end{align*}
    If $\textup{aff}(\tilde{A})$ and $\textup{aff}(\tilde{B})$ are not orthogonal, then by Lemma \ref{lemma:one_dimensional_one_non_convex}, we have
    \begin{align*}
        d(x,A+B)^2\leq d(\tilde{A}+\tilde{B})^2<d(A)^2+d(B)^2.
    \end{align*}
    Suppose that $\textup{aff}(\tilde{A})$ and $\textup{aff}(\tilde{B})$ are orthogonal. Translating if needed, we assume that there exist orthonormal vectors $u$ and $v$ such that 
    \begin{align*}
        \tilde{A}=\{0,r_2u\},\qquad \tilde{B}=\{0,s_2v\},\qquad x=ru+sv,
    \end{align*}
    for $0=:r_1\leq r\leq r_2$ and $0=:s_1\leq s\leq s_2$. Taking the closest distance to a vertex gives
    \begin{align*}
        d(x,A+B)^2\leq \min_{r_i\in \{0,r_2\},s_i\in\{0,s_2\}}(|r-r_i|^2+|s-s_i|^2)\leq d(A)^2+d(B)^2.
    \end{align*}
    The right-side inequality is strict unless
    \begin{align*}
        r=d(A),\qquad r_2=2d(A)\qquad s=d(B),\qquad s_2=2d(B). 
    \end{align*}
    That is, $\tilde{A}+\tilde{B}$ is the vertex set of a rectangle whose sides have maximum possible length, and $x$ is the center of the rectangle.

    Assume without loss of generality that $B$ is two-dimensional. For $\varepsilon>0$ and $\delta\in \{-1,1\}$, define 
    \begin{align*}
        c_{\varepsilon}:=d(B)v+\delta\varepsilon u.
    \end{align*}
    Since $d(B)v$ is the midpoint of the interval $[0,2d(B)v]\subset \textup{conv}(B)$ and $B$ is two-dimensional, we must have $c_\varepsilon\in \textup{conv}(B)$ for some $\delta$ and sufficiently small $\varepsilon$. Choose $p_{\varepsilon}\in B$ with 
    \begin{align*}
        \|p_\varepsilon-c_\varepsilon\|_2\leq d(B).
    \end{align*}
    Write 
    \begin{align*}
        p_\varepsilon=(\delta\varepsilon+a_\varepsilon)u+(d(B)+b_\varepsilon)v,\qquad a_\varepsilon^2+b_\varepsilon^2\leq d(B)^2.
    \end{align*}
    Taking the distance from $x$ to either $p_\varepsilon$ or $2d(A)u+p_\varepsilon$ gives
    \begin{align}\label{eq:epsilon_upper_bound}
        d(x,A+B)^2\leq (d(A)-|\delta\varepsilon +a_\varepsilon|)^2+b_\varepsilon^2.
    \end{align}
    Consider a sequence $\varepsilon\rightarrow 0$ along which $(a_\varepsilon,b_\varepsilon)$ converges to a point $(a,b)$. If $a\neq 0$, then the limit of the right side of \eqref{eq:epsilon_upper_bound} is
    \begin{align*}
        d(A)^2-2d(A)|a|+a^2+b^2<d(A)^2+d(B)^2.
    \end{align*}
    If $a= 0$, then for small $\varepsilon$ the quantity $|\delta\varepsilon +a_\varepsilon|$ is strictly less than $d(A)$. Then for $\varepsilon$ sufficiently small, either the quantity $|\delta\varepsilon +a_\varepsilon|$ is nonzero and the first term on the right of \eqref{eq:epsilon_upper_bound} is strictly less than $d(A)^2$, or it is zero so that $a_\varepsilon\neq 0$, and as a result $b_{\varepsilon}^2<d(B)^2$. It follows that the strict bound
    \begin{align*}
        d(x,A+B)^2<d(A)^2+d(B)^2
    \end{align*}
    holds for every $x\in \textup{conv}(A+B)$. The compactness of $A$ and $B$ implies that there exists an $x$ that achieves the maximum, which proves the desired strict bound.
\end{proof}

\begin{proof}[Proof of Theorem \ref{theorem:equality_conditions}]

First, we will prove the theorem for the case where $E=B_2^2$. If the sets $\textup{aff}(A)$ and $\textup{aff}(B)$ are orthogonal lines, then it follows from Lemma \ref{lemma:one_dimensional_one_non_convex} that equality holds. 

For the other direction, suppose that equality holds. By Lemma \ref{lemma:both_nonconvex_oneset_two_dimensional}, both sets are one-dimensional. It follows from Lemma \ref{lemma:one_dimensional_one_non_convex} that $\textup{aff}(A)$ and $\textup{aff}(B)$ are orthogonal lines. 

Now, we consider the case of an arbitrary ellipse $E$ centered at $0$. There exists an invertible linear transformation $T$ such that $TE=B_2^2$. Similar to the observation in the proof of Theorem \ref{theorem:norm_rigidity_theorem}, if $S\subset\mathbb{R}^2$ is a nonempty set, then
\begin{align*}
    d^{(E)}(S)=d(TS).
\end{align*}
It follows that equality holds for the distance $d^{(E)}$ with the sets $A,B$ if and only if equality holds for the distance $d$ with the sets $TA,TB$. That is, if and only if $\textup{aff}(TA)$ and $\textup{aff}(TB)$ are orthogonal lines, which is equivalent to saying that $\textup{aff}(A)$ and $\textup{aff}(B)$ are $E$-orthogonal lines.
\end{proof}

\begin{proof}[Proof of Proposition \ref{proposition:convex+nonconvex}]
    Using the formula $d^{(E)}(S)=d(TS)$ that we observed in the proof of Theorem \ref{theorem:equality_conditions}, it is enough to prove this result for the case $E=B_2^2$. If $B$ is two-dimensional, then we have nothing to prove. If $B$ is one-dimensional, then by Lemma \ref{lemma:convex_nonconvex_two_one}, $A$ must also be one-dimensional. But then by Lemma \ref{lemma:one_dimensional_one_non_convex}, the sets $\textup{aff}(A)$ and $\textup{aff}(B)$ are orthogonal lines. 
\end{proof}

\newpage
\bibliography{hausdorff_distance}
\bibliographystyle{abbrv}

\bigskip
\noindent Mark Meyer
\\
LAMA, Univ Gustave Eiffel, Univ Paris Est Creteil, 77447 Marne-la-Vall\'ee, France.
\\
E-mail address: mark.meyer@univ-eiffel.fr
\vspace{2mm}
\\

\end{document}